\theoremstyle{plain}
\newtheorem{prop}{Proposition}[section]
\newtheorem{lem}[prop]{Lemma}
\theoremstyle{definition}
\newtheorem{lab}[prop]{}
\newcommand{\C}{{\mathbb{C}}}
\newcommand{\Q}{{\mathbb{Q}}}
\newcommand{\R}{{\mathbb{R}}}
\newcommand{\Z}{{\mathbb{Z}}}
\DeclareMathOperator{\ch}{char}
\newcommand{\disc}{\mathrm{disc}}
\newcommand{\ol}{\overline}
\newcommand{\Label}[1]{\label{#1}}
\begin{document}

\title
[Hilbert's theorem on ternary quartics: Some complements]
{An elementary proof of Hilbert's theorem\\
on ternary quartics: Some complements}

\author{Albrecht Pfister}
\address
  {Institut f\"ur Mathematik,
  Johannes Gutenberg Universit\"at,
  Staudingerweg 9,
  55099 Mainz,
  Germany}
\email
  {pfister@mathematik.uni-mainz.de}
\author{Claus Scheiderer}
\address
  {FB Mathematik und Statistik,
  Universit\"at Konstanz,
  78457 Konstanz,
  Germany}
\email
  {claus.scheiderer@uni-konstanz.de}

\begin{abstract}
In 1888, Hilbert proved that every nonnegative quartic form
$f=f(x,y,z)$ with real coefficients is a sum of three squares of
quadratic forms. His proof was ahead of its time and used advanced
methods from topology and algebraic geometry. In our 2012 paper
\cite{ps} we presented a new approach that used only elementary
techniques. In this note we are adding some further simplifications
to this proof.
\end{abstract}

\maketitle

%===================================================================%

\section*{Introduction}

This note is an informal complement to our joint paper \cite{ps} from
2012. The purpose of that paper was to present a new approach to
Hilbert's celebrated theorem, according to which every nonnegative
ternary quartic form $f=f(x,y,z)$ over the real numbers can be
written as a sum of three squares of quadratic forms. Other than all
proofs given before, our proof was elementary, in the sense that it
did not use advanced tools from either topology or algebraic
geometry. In principle, the proof should be understandable by a
second year undergraduate.

Although the details of our approach were admittedly complicated in
some parts, the arguments used only basic algebra and calculus. There
remained, however, an annoying point: For one key step, we needed to
verify that a certain invariant (of triples of univariate
polynomials) does not vanish identically. See below for the details.
Easy as this sounds, it presents a problem if one is trying to argue
by paper and pencil only, due to the huge size and complicatedness of
that invariant. Therefore, we used a computer algebra system in
\cite{ps}, to verify the non-vanishing of the invariant for a
suitable example triple.

In Section~1 of this note we present a slight variation of the
approach in \cite{ps}. Combined with the study of a carefully
chosen example triple (Section~2), this does indeed allow to give the
proof, using no other tools beyond paper and pencil.

%-------------------------------------------------------------------%

\section{A slightly simpler invariant}

We start with a very brief summary of the approach in \cite{ps}.
Let $f=f(x,y,z)$ be a ternary quartic form over $\R$ that is
positive semidefinite (\emph{psd}), meaning that $f(a)\ge0$ for
every $a\in\R^3$. The goal is to show that there exist quadratic
forms $q_1,\,q_2,\,q_3$ in $\R[x,y,z]$ such that
\begin{equation}\Label{so3s}%
f\>=\>q_1^2+q_2^2+q_3^2.
\end{equation}
By a limit argument \cite[Lemma 3.6]{ps}, it suffices to prove the
existence of a representation \eqref{so3s} for generic psd $f$,
i.e., for every psd form $f$ that satisfies a condition
$\Psi(f)\ne0$ where $\Psi$ is a suitable fixed nonzero polynomial
in the coefficients of $f$.
Assuming that the psd form $f$ has no nontrivial real zero, we
constructed an explicit psd form $f^{(0)}$ with a nontrivial real
zero and such that, writing $f^{(t)}=tf+(1-t)f^{(0)}$, the form
$f^{(t)}$ is strictly positive for every $t$ with $0<t\le1$. Under a
number of explicit generic assumptions on $f$, we proved that every
representation \eqref{so3s} of $f^{(0)}$ can be extended continuously
to a representation of $f=f^{(1)}$ along the path $f^{(t)}$, thereby
proving Hilbert's theorem.

The hardest step in this approach was to prove the following
statement%
\footnote
  {we dehomogenize as in \cite{ps}, thereby passing from binary forms
  to univariate polynomials}
(\cite[Prop.~8.1]{ps}):

\begin{prop}\Label{prop8.1}%
Consider triples $(f_2,f_3,f_4)$ of univariate polynomials in
$\R[x]$, with $\deg(f_i)\le i$ for $i=2,3,4$, for which there exist
polynomials $\xi\in\R[x]_{\le2}$, $\eta\in\R[x]_{\le3}$ with
\begin{equation}\Label{}%
\eta^2\>=\>(f_2-t\xi)(4f_4-\xi^2)-f_3^2
\end{equation}
for some real number $t\ne0$, and such that the univariate
polynomials
\begin{equation}\Label{8.0ab}%
g_t(\xi)\>=\>t\xi^3-f_2\xi^2-4tf_4\xi+4f_2f_4-f_3^2,\quad
h_t(\xi)\>=\>3t\xi^2-2f_2\xi-4tf_4
\end{equation}
have a common irreducible quadratic factor. Then these triples are
not Zariski dense (in the space of all triples of the given degrees).
\end{prop}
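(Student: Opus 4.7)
The plan is to produce a polynomial invariant
$\Psi$ in the coefficients of the triple $(f_2,f_3,f_4)$ that
vanishes on every bad triple, and then to verify $\Psi\not\equiv0$
by a hand computation on the explicit example triple supplied by
Section~2. This exhibits the bad triples as lying in a proper
Zariski-closed subvariety of the ambient space of all triples of the
given degrees.

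First, I read $g_t(\xi)$ and $h_t(\xi)$ as the elements of $\R[x]$
obtained by evaluating the formulas \eqref{8.0ab} at the specific
$\xi\in\R[x]_{\le2}$. A direct expansion of
$(f_2-t\xi)(4f_4-\xi^2)-f_3^2$ shows that the displayed equation is
precisely $\eta^2=g_t(\xi)$ in $\R[x]$. Hence any irreducible
quadratic $p\in\R[x]$ dividing both $g_t(\xi)$ and $h_t(\xi)$ must
divide $\eta^2$, and consequently $p\mid\eta$; write $\eta=p\ell$
with $\ell\in\R[x]_{\le1}$.

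Next, I reduce modulo $p$ to the quadratic extension
$K:=\R[x]/(p)\cong\C$, writing $\ol{(\,\cdot\,)}$ for residues. The
conditions become $\ol\eta=0$ and
$\ol{g_t}(\ol\xi)=\ol{h_t}(\ol\xi)=0$ in $K$. Since
$h_t=\partial g_t/\partial\xi$ as polynomials in $\xi$, this exhibits
$\ol\xi\in K$ as a multiple root of the cubic $\ol{g_t}\in K[\xi]$;
equivalently, $p$ divides the discriminant
$D:=\disc_\xi(g_t)\in\R[x]$, and the double root lifts compatibly to
$\xi\in\R[x]_{\le2}$ with $\eta=p\ell$ satisfying $\eta^2=g_t(\xi)$
in $\R[x]$.

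These constraints, together with the equation, form a polynomial
system in the coefficients of $(f_2,f_3,f_4)$ and the auxiliary
parameters $(\xi,\eta,\ell,t,p)$. Eliminating the latter by
resultants---exploiting the rigid degree bounds $\deg\xi\le2$,
$\deg\eta\le3$, $\deg p=2$, $\deg\ell\le1$---yields the desired
polynomial $\Psi(f_2,f_3,f_4)$ that vanishes on every bad triple.
The main obstacle, and the reason \cite{ps} appealed to a computer
algebra system at precisely this point, will be to show
$\Psi\not\equiv0$: for the concrete triple constructed in Section~2
one verifies by hand that no $(\xi,\eta,t)$ satisfying the equation
produces a common irreducible quadratic factor of $g_t(\xi)$ and
$h_t(\xi)$, whence $\Psi$ is nonzero at that triple and the bad
triples are not Zariski dense.
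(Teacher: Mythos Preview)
Your overall strategy---produce a polynomial $\Psi$ in the coefficients of $(f_2,f_3,f_4)$ vanishing on every bad triple, then exhibit a triple where it does not vanish---is indeed the paper's strategy. But the execution has a real gap at exactly the point the paper works hardest on.

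First, ``eliminating the latter by resultants \dots\ yields the desired polynomial $\Psi$'' is not a construction. The elimination ideal could a priori be zero; showing it is nonzero is precisely the content of the proposition. You never write down a concrete $\Psi$. Second, your verification does not prove what you claim. Even if one granted that for the Section~\ref{part2} triple no $(\xi,\eta,t)$ produces a common irreducible quadratic factor of $g_t(\xi)$ and $h_t(\xi)$, it does \emph{not} follow that ``$\Psi$ is nonzero at that triple'': the Zariski closure of the bad locus can strictly contain the bad locus, because the projection from the total space with auxiliary parameters $(\xi,\eta,t,p,\ell)$ is not proper (note already the open condition $t\neq0$). A point can fail to be bad and still lie in $V(\Psi)$ for every $\Psi$ in the elimination ideal. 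Finally, you give no method to carry out ``no $(\xi,\eta,t)$ works'' by hand; this is an infinite family of conditions.

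The paper proceeds differently and avoids all three issues. Under explicit generic nondegeneracy hypotheses \eqref{nonSi} (each cutting out a proper closed set), Proposition~\ref{improvenaja} shows that a bad triple forces $\Phi_{8,9}(P,A,B)=0$, where $P,A,B$ are \emph{explicit} polynomials built from the $g_{ij}=if_if_j'-jf_jf_i'$ and hence depend only on $(f_2,f_3,f_4)$---not on $\xi,\eta,t$ or $p$. Thus the witness $\Psi$ is concretely the product of the genericity conditions with $\Phi_{8,9}(P,A,B)$. Non-vanishing is then checked in Section~\ref{part2} not by ranging over $(\xi,\eta,t)$, but by a finite arithmetic computation: factor $P$ over $\Q$, reduce $A$ and $B$ modulo each irreducible factor, observe linear independence, and invoke Lemma~\ref{3novlem} to rule out any $\C$-linear combination $aA+bB$ sharing two roots with $P$. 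That is what makes the argument genuinely paper-and-pencil. Your proposal lacks both the explicit $\Psi$ and a sound, finite verification of its non-vanishing.
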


To prove Proposition \ref{prop8.1}, we identified an explicit
polynomial in the coefficients of $f_2,f_3,f_4$, with the property
that it vanishes for all triples as in \ref{prop8.1}. The hard point
then was to show that this polynomial is not identically zero. More
specifically, the following was done in \cite{ps}.

\begin{lab}\Label{recallPhi}%
Fix integers $m,\,n\ge2$ and consider triples $(f,g,h)$ of univariate
polynomials in $\R[x]$ with $\deg(f)\le m$ and
$\deg(g),\,\deg(h)\le n$. In \cite[Sect.~7]{ps} we constructed an
explicit form $\Phi_{m,n}(f,g,h)$ in the coefficients of $f,\,g$ and
$h$, and with $\Z$-coefficients, for which the following holds:
Whenever $f\in\R[x]_{\le m}$ is separable with $\deg(f)\ge m-1$, and
$g,\,h\in\R[x]_{\le n}$ are linearly independent, the vanishing of
$\Phi_{m,n}(f,g,h)$ is equivalent to the existence of a pair
$(s,t)\ne(0,0)$ of complex numbers such that $f$ and $sg+th$ have a
common factor of degree at least two \cite[Cor.~7.3]{ps}. Speaking
informally, the invariant $\Phi$ allows to decide whether a given
polynomial has two roots in common with some member in a given pencil
of polynomials.

In principle, the construction of $\Phi_{m,n}$ in \cite{ps} is
explicit. In practice, however, it is close to impossible to
calculate $\Phi_{m,n}(f,g,h)$ by hand for general triples $(f,g,h)$,
even if $m$ and $n$ are only moderately large. This is due to the
enormous size of the invariant $\Phi_{m,n}$, in terms of its degree
and number of monomials (as a polynomial in the coefficients of
$f,g,h$), as well as in terms of its values.
\end{lab}

\begin{lab}\Label{assmpts}%
To describe how Proposition \ref{prop8.1} was proved in \cite{ps}
using a $\Phi_{m,n}$-invariant, assume now that
$f_2,\,f_3,\,f_4,\,\xi,\,\eta$ are polynomials in $\R[x]$ of
respective degrees at most $2,\,3,\,4,\,2,\,3$, and that $t\ne0$ is
a real number. For $i,j\in\{2,3,4\}$ put
$$g_{ij}\>=\>if_if'_j-jf_jf'_i.$$
The $g_{ij}$ satisfy the relation
\begin{equation}\Label{(8.8)}
2f_2g_{34}-3f_3g_{24}+4f_4g_{23}\>=\>0,
\end{equation}
see \cite{ps} 8.4. As in \cite{ps} 8.6--8.9, we assume that
\begin{equation}\label{nonSi}%
\gcd(f_3,f_4)=1,\ \gcd(g_{23},g_{24})=1,\
\gcd(g_{34},g_{24})=1,\ \Phi_{3,4}(f_3,f_2^2,f_4)\ne0,
\end{equation}
and also that $f_3$ and $f_6=4f_2f_4-f_3^2$ are separable.

Under these assumptions let $p\in\R[x]$ be an irreducible quadratic
polynomial that divides both polynomials $g_t(\xi)$ and $h_t(\xi)$
from \eqref{8.0ab}, for some $t\ne0$. Then the product
$\disc_8(P)\cdot\Phi_{8,11}(P,Q,R)$ was shown to vanish in
\cite{ps} 8.9, for the polynomials
\begin{equation}\Label{defP}%
P\>:=\>g_{24}^2-g_{23}g_{34},
\end{equation}
$$Q\>:=\>4f_3f_4g_{24}+(4f_2f_4-3f_3^2)g_{34},$$
$$R\>:=\>f_2(f_3^2g_{23}-4f_2f_3g_{24}+4f_2^2g_{34})$$
(of degrees $\le8,\,11,\,11$, respectively).

It therefore remained to show the existence of a single triple
$(f_2,f_3,f_4)$ for which $\Phi_{8,11}(P,Q,R)\ne0$. Here we used the
help of a computer algebra system (see \cite{ps} 8.11): For a certain
explicit triple $(f_2,f_3,f_4)$, the value of $\Phi_{8,11}(P,Q,R)$
was computed to be a nonzero integer with 225 decimal digits. In
fact, this was the smallest nonzero value of the invariant that we
could find (for $f_2,f_3,f_4$ with integer coefficients), after a
small random search.
\end{lab}

We now present an argument that leads to a modest simplification of
the $\Phi_{8,11}$-invariant:

\begin{prop}\Label{improvenaja}%
Under the assumptions made in \ref{assmpts}, the existence of a
common factor $p$ as in \ref{assmpts} implies $\Phi_{8,9}(P,A,B)=0$,
for $P$ as in \eqref{defP} and for
\begin{equation}\Label{dfnAB}%
A\:=\>g_{23}\cdot\bigl(g_{23}f_3-2g_{24}f_2\bigr),\quad
B\>:=\>g_{24}g_{34}.
\end{equation}
\end{prop}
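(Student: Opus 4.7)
The plan is to deduce $\Phi_{8,9}(P,A,B)=0$ from \cite[Cor.~7.3]{ps}: it suffices to exhibit $(s,t)\in\C^2\setminus\{(0,0)\}$ such that $P$ and $sA+tB$ share a common factor of degree $\ge 2$ in $\C[x]$, together with the mild non-degeneracy that $P$ is separable of degree $\ge 7$ and that $A,B\in\R[x]$ are $\R$-linearly independent. Under the assumptions of \ref{assmpts} the original argument in \cite[8.9]{ps} already gives $p\mid P$, so it remains to show that $p\mid sA+tB$ for some $(s,t)\in\R^2\setminus\{(0,0)\}$---equivalently, that the images $\bar A,\bar B$ in $\R[x]/(p)\cong\C$ are $\R$-linearly dependent.

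The input is a set of three polynomial relations satisfied by $\bar\xi\in\R[x]/(p)$. Since $\eta^2=g_t(\xi)$ (cf.~\ref{assmpts}) and $p$ is irreducible, $p\mid\eta$, hence $p^2\mid g_t(\xi)$, so $\bar\xi$ is a double root of $g_t$ viewed as a cubic in $\xi$: both $g_t(\bar\xi)=0$ and $h_t(\bar\xi)=0$ hold in $\R[x]/(p)$. Differentiating $\eta^2=g_t(\xi)$ with respect to $x$ and reducing modulo $p$ produces a third relation
\[
\bar f_2'\,\bar\xi^2+4t\,\bar f_4'\,\bar\xi-\bar f_6'=0\qquad\text{in }\R[x]/(p),
\]
where $\bar f_6=4\bar f_2\bar f_4-\bar f_3^2$. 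Eliminating $\bar\xi^2$ between the consequence $\bar f_2\bar\xi^2+8t\bar f_4\bar\xi-3\bar f_6=0$ of the first two relations and the derivative relation---using $2f_2f_4'-4f_2'f_4=g_{24}$ and the readily verified identity $3f_2'f_6-f_2f_6'=-2f_2g_{24}+f_3g_{23}$---gives the linear relation $2t\,\bar g_{24}\,\bar\xi=2\bar f_2\bar g_{24}-\bar f_3\bar g_{23}$ in $\R[x]/(p)$. Multiplying by $\bar g_{23}$ and invoking the definition of $A$ yields the first key formula
\[
\bar A\;\equiv\;-2t\,\bar g_{23}\bar g_{24}\,\bar\xi\pmod{p}.
\]

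A parallel expression for $\bar B$ begins by noting that \eqref{(8.8)} together with the identity $\bar g_{24}^2=\bar g_{23}\bar g_{34}$ (a consequence of $p\mid P$) yields $2\bar f_2\,\bar B=\bar g_{23}(3\bar f_3\bar g_{34}-4\bar f_4\bar g_{24})$ in $\R[x]/(p)$. A second elimination, now pairing $h_t(\bar\xi)=0$ with the derivative relation and again invoking \eqref{(8.8)}, is then used to express the factor $3\bar f_3\bar g_{34}-4\bar f_4\bar g_{24}$ in terms of $\bar g_{24}\bar\xi$ (with coefficients polynomial in $\bar f_2,\bar f_3,\bar f_4,t$ and their derivatives). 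Comparing with the first formula and using the explicit linear expression for $\bar\xi$ coming out of the first elimination produces the required $\R$-linear dependence of $\bar A$ and $\bar B$. The non-degeneracy hypotheses are handled by the generic assumptions \eqref{nonSi}: separability of $P$ is part of (or can be added to) these, while the $\R$-linear independence of $A=g_{23}(g_{23}f_3-2g_{24}f_2)$ and $B=g_{24}g_{34}$ is forced by $\gcd(g_{23},g_{24})=\gcd(g_{24},g_{34})=1$, since a non-trivial dependence would produce a common divisor among the $g_{ij}$ contradicting one of these coprimalities. The main obstacle is the second elimination: the simpler and lower-degree factored forms of $A,B$ relative to $Q,R$ in \cite[8.9]{ps} are precisely what make this compact enough to be carried out by paper and pencil, accounting for the improvement from $\Phi_{8,11}$ to $\Phi_{8,9}$.
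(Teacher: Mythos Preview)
Your setup and first elimination are correct and essentially coincide with the paper's argument: working modulo $p$ and combining $f_2\xi^2+8tf_4\xi-3f_6\equiv0$ (which is \cite[(8.4)]{ps}) with the $x$-derivative relation $f_2'\xi^2+4tf_4'\xi-f_6'\equiv0$ does give $2t\,g_{24}\,\xi\equiv 2f_2g_{24}-f_3g_{23}$, hence $\bar A\equiv-2t\,\bar g_{23}\bar g_{24}\,\bar\xi$. In the paper's notation $h\equiv g_{24}/g_{23}$, this is the statement $2th\,\xi\equiv 2hf_2-f_3$, and the goal becomes showing that $(f_3-2hf_2)/h^3\equiv\bar A/\bar B$ lies in $\R$.

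The gap is in your ``second elimination.'' Pairing $h_t(\bar\xi)=0$ with the derivative relation and eliminating $\xi^2$ yields
\[
\bigl(2f_2f_2'+12t^2f_4'\bigr)\,\xi\ \equiv\ t\bigl(3f_6'-4f_2'f_4\bigr),
\]
whose coefficients mix $t^2$ with $f_2',f_4',f_6'$ in a way that does not isolate $3f_3g_{34}-4f_4g_{24}$ as a real multiple of $g_{24}\xi$. Comparing this with your first expression for $\xi$ gives a polynomial identity among the $\bar f_i,\bar f_i'$, but no reason for $\bar A/\bar B$ to be real. The missing ingredient is not a linear elimination at all: the paper imports the \emph{quadratic} congruence \cite[(8.15)]{ps},
\[
(2f_2g_{24}-f_3g_{23})^2\ \equiv\ 8t^2\,g_{24}\bigl(2f_4g_{24}-f_3g_{34}\bigr),
\]
which after dividing by $g_{23}^2$ reads $(2hf_2-f_3)^2\equiv 8t^2h^2(2f_4-hf_3)$. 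Combining this with $2h^2f_2-3hf_3+4f_4\equiv0$ (your use of \eqref{(8.8)}) gives $(2hf_2-f_3)^2\equiv 4t^2h^3(f_3-2hf_2)$. The paper then needs a separate non-vanishing step (using \cite[(8.10), (8.4), (8.6)]{ps} and the assumed separability of $f_6$) to rule out $2hf_2\equiv f_3$; only then can one cancel to obtain $f_3-2hf_2\equiv 4t^2h^3$, i.e.\ $A\equiv 4t^2B$ modulo $p$. Your sketch invokes neither (8.15) nor this cancellation argument, and without them the $\R$-linear dependence of $\bar A,\bar B$ is not established.
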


Since the (generic) degree of $A$ and $B$ is $9$, and thereby smaller
than $11$ (the generic degree of $Q$ and $R$), the invariant
$\Phi_{8,9}(P,A,B)$ is expected to be slightly more accessible than
$\Phi_{8,11}(P,Q,R)$.
In Section~2 we are going to prove that this new invariant does not
vanish identically, in a way that can entirely be done by hand.

\begin{lab}
Here is the proof of Proposition \ref{improvenaja}. Under the
assumptions made in \ref{assmpts}, let $p\in\R[x]$ be an irreducible
quadratic polynomial that divides both $g_t(\xi)$ and $h_t(\xi)$. As
in \cite{ps} we denote congruences in $\R[x]$ modulo the principal
ideal $(p)$ by $\equiv$, so effectively we are doing calculations in
$\R[x]/(p)\cong\C$. By \cite{ps} 8.8 we have
$$g_{23}g_{34}-g_{24}^2\>\equiv\>0.$$
Conditions \eqref{nonSi} imply that $g_{23}$, $g_{24}$,
$g_{34}\not\equiv0$ (cf.\ \cite{ps} 8.8).
Let $h\in\R[x]$ be a polynomial with
$g_{23}h\equiv g_{24}$. Then $P\equiv0$ implies
\begin{equation}\Label{5}%
\frac{g_{24}}{g_{23}}\>\equiv\>\frac{g_{34}}{g_{24}}\>\equiv\>h,
\quad\frac{g_{34}}{g_{23}}\>\equiv\>h^2.
\end{equation}
Dividing \eqref{(8.8)} by $g_{23}$ we get
\begin{equation}\Label{7}
2h^2f_2-3hf_3+4f_4\>\equiv\>0.
\end{equation}
By \cite{ps} (8.15) we have
$$(2f_2g_{24}-f_3g_{23})^2\>\equiv\>8t^2g_{24}
\bigl(2f_4g_{24}-f_3g_{34}\bigr),$$
and dividing both sides by $g_{23}^2$ gives
\begin{equation}\Label{6}%
(2hf_2-f_3)^2\>\equiv\>8t^2h^2(2f_4-hf_3).
\end{equation}
\end{lab}

\begin{lab}\Label{step2}%
We claim that \eqref{6} does not vanish modulo~$p$. Indeed, assume
to the contrary that $2hf_2\equiv f_3$. Then \eqref{7} implies
$4f_4\equiv3hf_3-2h^2f_2\equiv3hf_3-hf_3=2hf_3$, and hence
\begin{equation}\Label{7a}
2f_4\>\equiv\>hf_3,
\end{equation}
and also
\begin{equation}\Label{7b}
f_6\>\equiv2hf_2f_3-f_3^2
\end{equation}
since $f_6=4f_2f_4-f_3^2$. According to \cite{ps} (8.10) we have
$g_{24}(4f_4-\xi^2)\equiv2f_3g_{34}$, and division by $g_{23}$ gives
$h(4f_4-\xi^2)\equiv2h^2f_3$. Cancelling $h$ (note that
$h\not\equiv0$) we get $4f_4-\xi^2\equiv2hf_3$. By \eqref{7a} this
says $\xi^2\equiv0$, and so $\xi\equiv0$. It follows that
$f_6\equiv f'_6\equiv0$ since $f_2\xi^2+8tf_4\xi\equiv3f_6$
(\cite{ps} (8.4)) and $f'_6\equiv f'_2\xi^2+4tf'_4\xi$ (\cite{ps}
(8.6)). So the discriminant of $f_6$ vanishes, contradicting one of
the assumptions made in \ref{assmpts}.
\end{lab}

\begin{lab}
According to \eqref{7} we have
$$4f_4-2hf_3\>\equiv\>(3hf_3-2h^2f_2)-2hf_3\>\equiv\>h(f_3-2hf_2).$$
Substituting this into the right hand side of \eqref{6}, we get
\begin{equation}\Label{8a}%
(2hf_2-f_3)^2\>\equiv\>4t^2h^2\cdot h(f_3-2hf_2)\>=\>
4t^2h^3(f_3-2hf_2).
\end{equation}
The factor $f_3-2hf_2$ occurs on both sides and may be cancelled
since $2hf_2\not\equiv f_3$ by \ref{step2}. So we conclude
\begin{equation}\Label{8}%
f_3-2hf_2\>\equiv4t^2h^3.
\end{equation}
By \eqref{5}, this says
$$f_3-2\frac{g_{24}}{g_{23}}f_2\>\equiv\>4t^2\frac{g_{24}g_{34}}
{(g_{23})^2}\,,$$
and after multiplying both sides with $(g_{23})^2$, we have shown
$$g_{23}\cdot\bigl(g_{23}f_3-2g_{24}f_2\bigr)-4t^2g_{24}g_{34}
\>\equiv\>0.$$
In other words, we have found a nontrivial linear combination of
\begin{equation}\Label{A}%
A\>=\>g_{23}\bigl(g_{23}f_3-2g_{24}f_2\bigr)
\end{equation}
and
\begin{equation}\Label{B}%
B\>=\>g_{24}g_{34}
\end{equation}
that shares the factor $p$ of degree two with $P$. Since
$\deg(A),\,\deg(B)\le9$, we conclude
$$\Phi_{8,9}(P,A,B)\>=\>0.$$
\end{lab}

We add the remark that $\Phi_{8,9}(P,A,B)$, considered as a
$\Z$-polynomial in the coefficients of $(f_2,f_3,f_4)$, is a form
of degree 476 in 12 variables.

%-------------------------------------------------------------------%

\section{Non-vanishing of the invariant}\Label{part2}%

We start with an easy lemma.
Let $k$ be a field with $\ch(k)=0$ and with algebraic closure
$\ol k$. Let $f,g,h\in k[x]$ be univariate polynomials such that $f$
is separable,
and let $m,\,n$ be integers with $\deg(f)=m$ and
$\deg(g),\,\deg(h)\le n$.

\begin{lem}\Label{3novlem}%
Assume that $\Phi_{m,n}(f,g,h)=0$ and that $g,h$ are relatively
prime. Then there exist roots $\alpha\ne\beta$ of $f$ in $\ol k$ and
elements $a,\,b\in k(\alpha)\cap k(\beta)$, not both zero, such that
$\alpha,\beta$ are roots of $p=ag+bh$. In particular, $p$ has
coefficients in the subfield $k(\alpha)\cap k(\beta)$ of~$\ol k$.
\end{lem}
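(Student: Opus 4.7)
The plan is to invoke the characterization of $\Phi_{m,n}$ recalled in \ref{recallPhi}, translate the resulting pencil condition into a $2\times2$ linear-algebra problem at two distinct roots of $f$, and then exhibit the coefficients $(a,b)$ explicitly. The criterion of \cite{ps} Cor.~7.3 is stated over $\R$, but $\Phi_{m,n}$ is defined by a $\Z$-polynomial and the underlying argument is purely elimination-theoretic, so it extends verbatim to any field $k$ of characteristic zero with $\ol k$ in place of $\C$. Thus $\Phi_{m,n}(f,g,h)=0$ yields a pair $(s_0,t_0)\in\ol k^2\setminus\{(0,0)\}$ such that $f$ and $s_0g+t_0h$ share a factor of degree at least $2$. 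Separability of $f$ forces this common factor to have two distinct roots $\alpha\ne\beta$ of $f$.

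The condition that $s_0g+t_0h$ vanishes at both $\alpha$ and $\beta$ amounts to saying that the matrix
$$M\>:=\>\begin{pmatrix} g(\alpha) & h(\alpha) \\ g(\beta) & h(\beta) \end{pmatrix}$$
has nontrivial kernel, equivalently $g(\alpha)h(\beta)=g(\beta)h(\alpha)$. Coprimality of $g,h$ rules out $g(\alpha)=h(\alpha)=0$ (else $x-\alpha$ would divide both), and similarly at $\beta$. It now suffices to exhibit a kernel vector $(a,b)\ne(0,0)$ of $M$ whose entries lie in $k(\alpha)\cap k(\beta)$, because then $p=ag+bh$ automatically vanishes at both $\alpha$ and $\beta$.

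The heart of the argument is a case split. If $g(\alpha)\ne0$ and $g(\beta)\ne0$, set $c:=-h(\alpha)/g(\alpha)$. A priori $c\in k(\alpha)$; but the determinant equation rewrites it as $c=-h(\beta)/g(\beta)\in k(\beta)$. Hence $c\in k(\alpha)\cap k(\beta)$, and $(a,b)=(c,1)$ does the job, giving $p=cg+h$. If instead $g(\alpha)=0$, then $h(\alpha)\ne0$, the first row of $M$ forces the second coordinate of any kernel vector to vanish, and the determinant relation then forces $g(\beta)=0$ as well; so $(a,b)=(1,0)\in k^2$ works, with $p=g$. The case $g(\beta)=0$ is symmetric.

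The main obstacle is really only in the first paragraph: one must check that Cor.~7.3 of \cite{ps}, which is phrased over $\R$, does hold over an arbitrary characteristic-zero field. Everything else is routine linear algebra, with one mildly surprising trick --- reading the single equation $-h(\alpha)/g(\alpha)=-h(\beta)/g(\beta)$ as simultaneously placing this element in $k(\alpha)$ and in $k(\beta)$ --- that is, in essence, the whole content of the lemma.
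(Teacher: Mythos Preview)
Your proof is correct and follows essentially the same route as the paper's: invoke \cite[Cor.~7.3]{ps} to get $\alpha\ne\beta$ and a pencil member $s_0g+t_0h$ vanishing at both, then read off the ratio $-h(\alpha)/g(\alpha)=-h(\beta)/g(\beta)\in k(\alpha)\cap k(\beta)$. Your matrix formulation and explicit treatment of the degenerate case $g(\alpha)=0$ (which the paper absorbs into ``we may assume $t=1$'') are minor organizational differences, not a different approach.
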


\begin{proof}
$\Phi_{m,n}(f,g,h)=0$ implies that there are $s,t\in\ol k$, not both
zero, such that $\alpha,\beta$ are roots of $sg-th$
(\cite[Cor.~7.3]{ps}, see \ref{recallPhi}).
We may assume that $t=1$,
and so $sg(\alpha)-h(\alpha)=sg(\beta)-h(\beta)=0$. Since $g,h$ are
relatively prime we have $g(\alpha)g(\beta)\ne0$.
Therefore $s=\frac{h(\alpha)}{g(\alpha)}=\frac{h(\beta)}{g(\beta)}$
lies in $k(\alpha)\cap k(\beta)$, and we may take $a=s$, $b=-1$.
\end{proof}

\begin{lab}\Label{calcrems}%
After a longer search for suitable example triples $(f_2,f_3,f_4$)
(which was carried out with the help of a computer algebra system, in
this case \texttt{MAPLE}), we came up with the following one. The
advantage of this triple is that the polynomial $P$ derived from
$(f_2,f_3,f_4$) (of degree~8, see \eqref{defP}) splits over $\Q$ into
factors of degree at most three, with only one cubic factor.

The example is as follows. Take
$$f_2=2x^2-1,\quad
f_3=2x^3-x^2-2x+1,\quad
f_4=x^4+x^3-2x^2+x+1.$$
Then $P=g_{24}^2-g_{23}g_{34}=56x^8-52x^7+180x^6-40x^5+40x^4+284x^3
+84x^2+128x-40$ factors as
\begin{equation}\Label{factorsP}%
P\>=\>4(x+1)(2x^2+x+1)(x^2-2x+2)(7x^3-3x^2+21x-5)
\end{equation}
and has only two real zeros. Denote the factors of $P$ by $\ell=x+1$
and by
\begin{equation*}
q_1=2x^2+x+1,\quad q_2=x^2-2x+2,\quad g=7x^3-3x^2+21x-5.
\end{equation*}
Then $q_1,\,q_2,\,g$ are irreducible over $\Q$, and $g$ has only one
real zero. The polynomials
$$A=96x^9-16x^8-160x^7+704x^6-208x^5-512x^4+208x^3+208x^2-128x$$
and
$$B=40x^9-108x^8+316x^7-198x^6+316x^5+122x^4+180x^3-178x^2-84x-22$$
derived from $f_2,f_3,f_4$ (see \eqref{A}, \eqref{B}) reduce
modulo $q_1$, $q_2$ and $g$ as follows:
\begin{equation}\Label{modq1}%
A\>\equiv\>-\frac14(2389\,x+271)\ \text{ (mod }q_1),\quad
B\>\equiv\>\frac1{16}(741\,x+1471)\ \text{ (mod }q_1),
\end{equation}
\begin{equation}\Label{modq2}%
A\>\equiv\>-1280\,x+3616\ \text{ (mod }q_2),\quad
B\>\equiv\>-1648\,x+870\ \text{ (mod }q_2)
\end{equation}
and
$$A\>\equiv\>\frac1{7^7}\bigl(3869324320\,x^2+9251095616\,x-2476940000\bigr)\
\text{ (mod }g),$$
\begin{equation}\Label{modg}%
B\>\equiv\>\frac1{7^7}\bigr(818130160\,x^2-1744372672\,x+333091504\bigr)\
\text{ (mod }g).
\end{equation}
Using polynomial division with remainder, congruences \eqref{modq1},
\eqref{modq2} and \eqref{modg} can be found using only paper and
pencil (plus, for sure, a certain amount of diligence!). Since $A$
and $B$ are visibly linearly independent modulo $q_1$ (congruences
\eqref{modq1}), there cannot be any non-trivial linear combination of
$A$ and~$B$ that is divisible by $q_1$. A similar argument works when
$q_1$ is replaced with $q_2$ or with~$g$.
\end{lab}

\begin{lab}
We claim that the previous discussion already implies
$\Phi_{8,9}(P,A,B)\ne0$.
To see this, note first that $A$ and $B$ are relatively prime (again,
this is easily verified by hand using the Euclidean algorithm).
So Lemma \ref{3novlem} can be applied. By \ref{recallPhi} it suffices
to show that $P$ has at most one root in common with an arbitrary
non-zero $\C$-linear combination of $A$ and~$B$. Assume to the
contrary that $P$ has two roots $\alpha\ne\beta$ in common with
$p=aA+bB$, where $(a,b)\ne(0,0)$ is a pair of complex numbers. If
$\alpha,\,\beta$ were the
roots of $q_1$ then $q_1$ would divide $p$, contradicting
\eqref{modq1}. In a similar way one sees that $\alpha,\,\beta$ cannot
be the roots of $q_2$. Looking at the factorization \eqref{factorsP}
of $P$, one sees that in all other possible cases we have
$\Q(\alpha)\cap\Q(\beta)=\Q$. (Observe for this that the splitting
field of $g$ has degree~$6$ over $\Q$, so any two different roots of
$g$ generate different cubic extensions of $\Q$.) Therefore
$p$ has rational coefficients by Lemma \ref{3novlem}. This implies
that $p$ is divisible by at least one of $q_1,\,q_2$ or $g$. Again,
this is impossible by the discussion in \ref{calcrems}.
\end{lab}

To summarize, by analyzing the above triple $(f_2,f_3,f_4)$ with
paper-and-pencil arguments only, we concluded that the invariant
$\Phi_{8,9}(P,A,B)$ is non-zero (thereby implying Proposition
\ref{prop8.1} by Proposition \ref{improvenaja}). The exact value of
this invariant, calculated with the help of a computer algebra
system, happens to be the integer

\vspace{2mm}
{\Small
\begin{center}
\begin{tabular}{r}
170180100414489407673826285238621248588184132495664769101548147694597
\textbackslash \\
641645055149834797961367009741001058378563516737825717521245942079
\textbackslash \\
363665365894932768287485782991982952060121491854462396585226867885
\textbackslash \\
300239619184714256923401363159130009392223954249957235784417280000
\phantom{\textbackslash}
\end{tabular}
\end{center}
}%
\vspace{2mm}
\noindent
with 267 decimal digits and with prime factorization

{\Small
$$2^{368}\cdot3^{68}\cdot5^4\cdot7^4\cdot11^2\cdot13^{30}\cdot17^3
\cdot29^2\cdot43^{28}\cdot53\cdot137\cdot389\cdot577\cdot1381\cdot
1657\cdot11173\cdot18757\cdot121349.$$%
}%

%===================================================================%

\end{document}